\documentclass[a4paper,12pt]{article}

\usepackage{a4,amssymb,amsmath,amsthm,url,xcolor,enumerate,float,lineno}
\usepackage{bezier,amsfonts,amssymb,graphicx,amsthm,url}
\usepackage[utf8]{inputenc}
%%%%%%%%%%%%%%%
\usepackage{amsmath}
\usepackage{amsfonts}
\usepackage{amssymb,amsthm}
\usepackage{url}
%%%%%%%%%%%%%%%

%%%%%%%%%%%%%%%
%\newtheorem{Lemma}{Lemma}[section]
%\newtheorem{Definition}{Definition}[section]
%\newtheorem{Theorem}{Theorem}[section]
%\newtheorem{Example}{Example}[section]
%\newtheorem{Observation}{Observation}[section]
%\newtheorem{Conjecture}{Conjecture}[section]
%\newtheorem{Corollary}{Corollary}[section]

\newtheorem{theorem}{Theorem}[section]
\newtheorem{definition}{Definition}[section]
\newtheorem{proposition}[theorem]{Proposition}
\newtheorem{corollary}[theorem]{Corollary}
\newtheorem{lemma}[theorem]{Lemma}
\newtheorem*{theorem*}{Theorem}
\newtheorem*{problem*}{Problem}

\def\kaxxa{{\vcenter {\hrule height .2mm
\hbox{\vrule width .2mm height 2mm \kern 2mm
\vrule width .2mm} \hrule height .2mm}}}

\title{The feasibility problem  - the family ${\cal F}$$(G)$ of all induced $G$-free graphs.   
}
\author{\begin{tabular}{cc}Yair Caro & Matthew Cassar \\ University of Haifa-Oranim & University of Malta\\\\ Josef Lauri & Christina Zarb \\ University of Malta & University of Malta \end{tabular}}
\date{ }

\DeclareGraphicsExtensions{.pdf,.png,.jpg}
%\includeonly{chap1,chap2,chap3,chap4,chap5,chap6,chap7,chap8,chap10 %,chap11}

%\includeonly{chap4}

\begin{document}

%\linenumbers

%\bibliographystyle{alpha}
%\bibliographystyle{plain}

\maketitle

\begin{abstract}

An infinite family of graphs ${\cal F}$ is called feasible if for any pair of integers $(n,m)$, $n \geq 1$, $0  \leq m \leq \binom{n}{2}$, there is a member $G \in  {\cal F}$ such that $G$ has $n$ vertices and $m$ edges.
 
We prove that given a graph $G$, the family ${\cal F}$$(G)$ of all induced $G$-free graphs is feasible if and only if $G$ is not $K_k$, $K_k\backslash K_2$, $\overline{K_k}$, $\overline{K_k\backslash K_2}$, for $k \geq 2$.
\end{abstract}
 
\section{Introduction}

The \emph{Feasibility Problem} is an umbrella for various  specific problems in extremal combinatorics:  Let ${\cal F}$ be an infinite family of graphs.  Then ${\cal F}$ is called feasible if for every $n \geq 1$, $0  \leq m \leq \binom{n}{2}$,  there is  as graph $G \in {\cal F}$ having exactly $n$ vertices and $m$ edges.

If ${\cal F}$ is not feasible, it is of interest to find the set of all feasible pairs \[FP({\cal F})= \{ ( n ,m)  :  \mbox{ there is  a graph $G \in {\cal F}$ having exactly $n$ vertices and $m$ edges}\},\]  as well as the complementary set  \[\overline{FP}({\cal F})= \{  (  n, m)  : \mbox{ no member of ${\cal F}$ has precisely $n$ vertices and $m$ edges}\}.\] If it is not possible to exactly determine these sets, we look for good estimates of $h(n,{\cal F}) = \frac{|FP({\cal F})|}{\binom{n}{2}}$. and $g(n,{\cal F}) =  \frac{|\overline{FP}({\cal F})|}{\binom{n}{2}}$.  

Also in many cases in extremal graph theory it is of interest to find \[f(n,{\cal F})  =  \min \{m: ( n ,m) \mbox{ is not a feasible pair for the familiy ${\cal F}$}\}\] as well as \[F(n,{\cal F}) = \max \{m: ( n ,m) \mbox{ is not a feasible pair for the family ${\cal F}$}\}.\]
 
A simple example is  ${\cal F}$, the family of all connected graphs.  Clearly  every connected graph on $n$ vertices must have at least $n-1$ edges  and it is trivial to see that with the above notation, $f(n,{\cal F}) = 0$ and $F(n,{\cal F}) =  n-2$.  Another example is the family ${\cal F}$ of all planar graphs. Here it is well  known that $f(n,{\cal F}) = 3n-5$ for $n \geq 4$ (since a maximal planar graph can have at most $3n -6$ edges for $n \geq 3$) and $F(n,{\cal F}) =  \binom{n}{2}$  for $n \geq 5$. In both of these examples, the exact determination of $FP({\cal F})$ and $\overline{FP}({\cal F})$ as well as $h(n,{\cal F})$ and $g(n,{\cal F})$ is easy.

A further important example is the celebrated  problem of Tur{\'a}n numbers $ex(n,G)$ \cite{turan1941external,turan1954theory}, which is the maximum number of edges  in a graph on $n$ vertices which does not have $G$ as a subgraph.

%.{m: \mbox{there exists a graph $H$ on $n$ vertices and $m$ edges containing no copy of $G$}\}.\]

Clearly, with the notation above where ${\cal F}$ is the family of all $G$-free  graphs,  $ex(n,G)=  \min\{  f(n, {\cal F}) - 1, \binom{n}{2}\}$.  

Also for the class ${\cal F}$ of $G$-free graphs,  $g(n,{\cal F}) \rightarrow 1$ if $G$ is a bipartite graph, while  $g(n,{\cal F}) \rightarrow \frac{1}{2(\chi(G) - 1)}$ otherwise (by Erd{\"o}s-Stone-Simonvits  theorem \cite{erdos1966limit,stone1946structure}),  where $\chi(G)$ is the chromatic number of $G$. 
For references to extremal graph theory we  refer to  \cite{alon2016probabilistic,bollobas2004extremal,furedi2013history}.     

Erd{\"o}s, Furedi, Rothschild and Sos \cite{erdHos1999induced} initiated a study of classes of graphs that forbid every induced subgraph on a given number $m$ of vertices and number $f$ of edges.  They used the notation  $(n, e) \rightarrow (m, f)$ if every graph $G$ on $n$ vertices and  $e$ edges has an induced subgraph on $m$ vertices and $f$ edges,  and they looked for pairs for which this relation does not hold, calling them avoidable pairs.   
 
So if we define $Q = Q(m,f)  =  \{  G :  |G| = m , e(G) = f \}$,  then, in our notation, the family ${\cal F}$ considered above is the family ${\cal F}(Q)$ of all $G$-free  graphs where $G  \in  Q$.  We emphasize here that the main interest in this  line of research is to estimate  a density measure defined by  \[\sigma(m, f) = \lim_{n \rightarrow\infty} \frac{|\{e : (n, e) \rightarrow (m, f)\}|}{\binom{n}{2}}\] (along the lines indicated by the above definition of $g(n,{\cal F})$), and the proofs incorporate number theoretic arguments.  

It is known that if $(m, f) \in \{(2, 0),(2, 1),(4, 3),(5, 4),(5, 6)\}$, then $\sigma(m, f) = 1$; otherwise, $\sigma(m, f) \leq 1/2$  (see the references above). Also, Erd{\"o}s et al. gave a construction that shows that for most pairs $(m, f)$ we have $\sigma(m, f) = 0$.  For recent papers on this highly active subject we refer to \cite{axenovich2022unavoidable,axenovich2021absolutely,he2023improvements,weber2022unavoidable}. 

Yet another example is given in the paper \cite{caro2023feasibility} by the authors --- the feasibility problem for  line graphs --- where we solved completely $\overline{FP}({\cal F})$ and hence  $FP({\cal F}$)  when ${\cal F}$ is the family of all line graphs . In particular the values of $f(n,{\cal F})$ and $F(n,{\cal F})$ are exactly determined for the family ${\cal F}$ of all line graphs.

Reznick \cite{reznick1989sum} solved asymptotically, via a number theoretic approach, the value of $f(n,{\cal F}) = \frac{n^2}{2}  - \sqrt(2)n^{3/2} + O(n^{5/4)})$  where ${\cal F}$ is the family of all induced $P_3$-free graphs (corresponding to $(m,f) = (3,2)$  where clearly $\sigma(3, 2) = 0$), which are graphs represented as a vertex disjoint union of cliques, and his method is still in use in the research about the family $Q(m,f)$ defined above.    

The same order of magnitude is proved for $f(n,{\cal F})$  in case  where ${\cal F}$ is the family of all line graphs of acyclic graphs, as well as the family of all line graphs \cite{caro2023feasibility}.

Here, inspired in part by the problem launched by Erd{\"o}s et al. concerning ${\cal F}(Q)$, and as a counterpart to the Tur{\'a}n  problem concerning families with no subgraph isomorphic to $G$, we consider the case where ${\cal F}  = {\cal F}(G)$ is the family of all induced $G$-free graphs.  Clearly, if $G \in \{ K_k, K_k\backslash K_2, \overline{K_k}, \overline{K_k \backslash K_2}\}$ for $ k \geq 2$, then ${\cal F}(G)$ is trivially non-feasible, hence we use in the sequel TNF$=\{ K_k, K_k\backslash K_2, \overline{K_k}, \overline{K_k \backslash K_2}\}$ for $k \geq 2$.  We prove the following, our main theorem, using only graph theoretic arguments: 
\begin{theorem*}[\textbf{Main}]
Let $G$ be a graph --- the family ${\cal F}(G)$ of all induced $G$-free graphs is feasible if and only if $G$ is not a member of  \[\mbox{TNF} = \{ K_k, K_k\backslash K_2, \overline{K_k}, \overline{K_k \backslash K_2}\}\]  for $ k \geq 2$.
\end{theorem*}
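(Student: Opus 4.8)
The plan is to record two reductions, dispatch the ``only if'' direction, and then build the required graphs for the ``if'' direction family by family, according to coarse structural invariants of $G$. Write $k=|V(G)|$ and $e=e(G)$. The key reformulation is that on $k$ vertices each of the edge counts $0,1,\binom k2-1,\binom k2$ is realised by a \emph{unique} graph, namely $\overline{K_k}$, $\overline{K_k\setminus K_2}$, $K_k\setminus K_2$, $K_k$ respectively, and these are exactly the members of TNF. Hence $G\in$ TNF iff $e\in\{0,1,\binom k2-1,\binom k2\}$, and therefore $G\notin$ TNF iff $k\ge4$ and $2\le e\le\binom k2-2$. This gives the ``only if'' direction at once: if $G=K_k$ (resp. $\overline{K_k}$, $K_k\setminus K_2$, $\overline{K_k\setminus K_2}$) then the pair $(k,\binom k2)$ (resp. $(k,0)$, $(k,\binom k2-1)$, $(k,1)$) is infeasible, since the unique $k$-vertex graph with that many edges is $G$ itself. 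The second reduction is complementation: $H$ is induced $G$-free iff $\overline H$ is induced $\overline G$-free, and $e(\overline H)=\binom n2-e(H)$, so $\mathcal F(G)$ is feasible iff $\mathcal F(\overline G)$ is; since TNF is complement-closed I may use this symmetry freely.

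For the ``if'' direction assume $G\notin$ TNF, so $k\ge4$ and $2\le e\le\binom k2-2$; in particular $G$ has an edge and a non-edge. The range $n<k$ is immediate: every $n$-vertex graph is induced $G$-free and every pair $(n,m)$ is realisable. For $n\ge k$ the boundary edge counts $m\in\{0,1,2\}$, and by complementation $m\in\{\binom n2,\binom n2-1,\binom n2-2\}$, are handled directly: for each such $m$ there are only one or two $n$-vertex graphs up to isomorphism, their induced subgraphs are explicit, and the characterisation above shows that $G$, not lying in TNF, is an induced subgraph of at most one of them, so a realisation survives.

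The heart of the argument is to realise every remaining pair by a ``sweep'' whose edge count increases in steps of $1$, so that no number-theoretic input is needed. Two sweeps do most of the work. First the \emph{split sweep}: take a clique $C$ of size $a$ and an independent set $I$ of size $n-a$ and add cross-edges one at a time; as $a$ and the number of cross-edges vary the edge count takes every value in $[0,\binom n2]$ without gaps, because $a(n-a)\ge a=\binom{a+1}2-\binom a2$. Every graph so produced is a split graph, and split graphs are closed under taking induced subgraphs; hence if $G$ is \emph{not} a split graph the split sweep is automatically induced $G$-free, and $\mathcal F(G)$ is feasible. Second the \emph{bipartite/co-bipartite sweep}: a balanced bipartite graph sweeps all of $[0,\lfloor n^2/4\rfloor]$ and is triangle-free, so it avoids induced $G$ whenever $G$ contains a triangle; dually its complement sweeps $[\binom n2-\lfloor n^2/4\rfloor,\binom n2]$ and avoids induced $G$ whenever $\overline G$ contains a triangle. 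The two intervals overlap, so every $G$ with $\omega(G)\ge3$ and $\alpha(G)\ge3$ is settled.

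Combining the two sweeps leaves exactly the residual class: $G$ is a split graph with $\omega(G)\le2$ or $\alpha(G)\le2$, and by complementation I may assume $G$ is a triangle-free split graph with $2\le e\le\binom k2-2$. These form a very structured list — stars $K_{1,s}$, the path $P_4$, double-stars, and $P_3\cup\overline{K_t}$, each possibly with isolated vertices — and I expect this residual class to be the main obstacle. For each pattern I would exhibit a bespoke family that avoids induced $G$ while sweeping all edge counts: disjoint unions of cliques cover the sparse end (they contain no induced $P_3$, and every such $G$ does contain one, as a triangle-free split graph with at least two edges cannot be a matching plus isolated vertices), while a suitable dense family covers the top — cocktail-party graphs $K_{2,\dots,2}$ for the claw, which are claw-free precisely because all parts have size $\le2$, or complete multipartite graphs when $G$ has an induced $\overline{P_3}$. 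The delicate step is to bridge the \emph{middle} edge counts with a further construction, such as a bipartite blow-up or line-graph-type gadget, that still avoids $G$. Guaranteeing continuous, gap-free coverage of the middle while never reintroducing an induced copy of $G$, uniformly across this residual list and using only the inequality $2\le e\le\binom k2-2$, is where the real work lies.
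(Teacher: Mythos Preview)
Your ``only if'' direction, the complementation reduction, and both sweeps are correct. The split sweep genuinely realises every edge count with split graphs, and the bipartite/co-bipartite sweep does cover $[0,\binom n2]$ whenever $\omega(G)\ge 3$ and $\alpha(G)\ge 3$.

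The gap is exactly where you say it is, and it is a real one. Your residual class --- triangle-free split graphs not in TNF --- contains, among others, all stars $K_{1,s}$ with $s\ge 3$, the path $P_4$, all double-brooms, and $P_3\cup\overline{K_t}$. None of the families you name for this class actually sweeps: disjoint unions of cliques miss many edge counts already for $n=4$ (only $0,1,2,3,6$ are attainable), cocktail-party graphs give a single edge count per $n$, and complete multipartite graphs are useless precisely for the stars $K_{1,s}$, which have no induced $K_2\cup K_1$. So for $G=K_{1,3}$, say, you have at this point no construction realising $(n,m)$ for most $m$, and the phrase ``a further construction, such as a bipartite blow-up or line-graph-type gadget'' is not yet an argument. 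The difficulty is not merely bridging a middle range: for the claw you need a \emph{single} claw-free family that hits every $m$, and nothing you have written provides one.

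The paper's route avoids this by making the split sweep do more work. Instead of adding cross-edges in an arbitrary order, it uses a specific order (the Universal Elimination Process: isolate one vertex at a time), so that every graph produced lies in the much thinner class $H(p,q,r)=(K_p\setminus K_{1,q})\cup\overline{K_r}$. This single choice already dispatches $K_{1,s}$ for $s\ge 3$, $P_4$, and all double-brooms, since none of these is an $H(p,q,r)$ graph; your generic split sweep cannot make that claim because $P_4$ and $K_{1,s}$ \emph{are} split. What remains is then only the $H(p,q,r)$ graphs themselves, and a second explicit sweep (deleting $xK_3\cup yK_2$ from $K_p$) handles all of those with $p\ge 4$ and $2\le q\le p-2$, leaving a short finite case analysis. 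The moral: your split sweep is the right idea, but the order in which you add the cross-edges matters --- a careful order reduces the residual from ``all triangle-free split graphs'' to a list short enough to finish by hand.
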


In other words, if $G$ is not a member of TNF, then for every pair $(n,m)$, $n \geq 1$, $0  \leq m \leq \binom{n}{2}$, there is an induced $G$-free graph with exactly $n$ vertices and $m$ edges.

While $f(n,{\cal F})$, $F(n,{\cal F})$, $\overline{FP}({\cal F})$ and $FP({\cal F})$ are  determined by Tur{\'a}n 's Theorem for the cases ${\cal F} = {\cal F}(K_k)$ and  ${\cal F} = {\cal F}(\overline{K_k})$, determining $f(n,{\cal F})$ and $\overline{FP}({\cal F})$ for  ${\cal F} = {\cal F}(K_k\backslash K_2)$ and $F(n,{\cal F})$ and $ \overline{FP}({\cal F})$ for $ {\cal F} = {\cal F}(\overline{K_k \backslash K_2})$ are not yet solved.  

We have already started considering the general feasibility problem in our paper \cite{caro2023feasibility} where we proved that several natural families of graphs  are feasible, namely  $K_{1,r}$-free graphs for $r \geq 3$, $P_r$-free graphs for $r \geq 4$, $rK_2$-free graphs for $r \geq 2$,  as well the family of chordal graphs, and the family  of paw-free graphs.

In the rest of the paper, when we say $G$-free we mean induced $G$-free.

Our paper is organized as follows:  in section 2 we  discuss some basic properties of  feasible families  with regards  to containment and complementation.  We then introduce the two main constructions crucial for the proof of the main theorem. The first is the UEP (Universal Elimination Process), first introduced in \cite{caro2023feasibility}  and the second is the $\{K_3,K_2\}$-elimination process.  We shall discuss some consequences of these constructions.

In section 3 we prove the main theorem of this paper.

In section 4  we  offer  interesting examples and questions for further research.

\section{Feasible Families under containment and complementation and elimination procedures}

\subsection{ Basic properties}

The following are simple basic properties concerning feasibility subject to containment and complementation. The proofs are easy but we include them for the sake of completeness.

\begin{proposition} \label{prop1}
Let ${\cal F}$ and ${\cal H}$ be two families of graphs  such that  ${\cal H} \subset  {\cal F}$.  Then
\begin{enumerate}
\item{If ${\cal H}$ is a feasible family then ${\cal F}$ is feasible family.}
\item{If ${\cal F}$ is not a feasible family then ${\cal H}$ is not feasible family.}
\end{enumerate}

 \end{proposition}
The proof is trivial.

\begin{proposition} \label{prop2}
 $\mbox{ }$
 \begin{enumerate}
\item{Let ${\cal F}$ be a family of graphs and $ \overline{{\cal F}} =  \{  \overline{G} : G \in  {\cal F} \}$.  Then ${\cal F}$ is feasible if and only if $\overline{{\cal F}}$ is feasible.}
\item{Let ${\cal F}(G)$ be the family of all induced $G$-free graphs . Then $\overline{{\cal F}} = {\cal F}(\overline{G})$, the family of all induced $ \overline{G}$-free graphs, is feasible if and only if ${\cal F}(G)$ is feasible.}
\end{enumerate}
\end{proposition}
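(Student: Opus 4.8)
The plan is to prove the two parts in order, deriving the second from the first. For part (1) the key observation is that complementation is an involution on graphs that fixes the vertex count and reflects the edge count: if $G$ has $n$ vertices and $m$ edges, then $\overline{G}$ has $n$ vertices and $\binom{n}{2}-m$ edges. First I would note that a pair $(n,m)$ is realized by some $G \in {\cal F}$ precisely when the pair $(n,\binom{n}{2}-m)$ is realized by $\overline{G} \in \overline{{\cal F}}$. Since the map $m \mapsto \binom{n}{2}-m$ is a bijection of $\{0,1,\dots,\binom{n}{2}\}$ onto itself, every admissible pair is feasible for ${\cal F}$ if and only if every admissible pair is feasible for $\overline{{\cal F}}$; the converse direction comes for free from $\overline{\overline{{\cal F}}}={\cal F}$. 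This part is essentially bookkeeping and I expect no real difficulty.

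For part (2) the substantive step is the set identity $\overline{{\cal F}} = {\cal F}(\overline{G})$. The crucial (elementary) lemma is that complementation preserves the induced-subgraph relation: a graph $H$ contains $G$ as an induced subgraph if and only if $\overline{H}$ contains $\overline{G}$ as an induced subgraph, because restricting to a vertex subset $S$ commutes with complementation, i.e. $\overline{H}[S] = \overline{H[S]}$. I would use this equivalence to conclude that $H$ is induced $G$-free exactly when $\overline{H}$ is induced $\overline{G}$-free. Hence $\overline{{\cal F}} = \{\overline{H} : H \text{ induced } G\text{-free}\}$ is precisely the family of all induced $\overline{G}$-free graphs, namely ${\cal F}(\overline{G})$; the reverse inclusion follows by applying the same equivalence to $\overline{K}$ for an arbitrary induced $\overline{G}$-free graph $K$ and using $\overline{\overline{K}}=K$. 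With this identity established, part (2) is immediate from part (1) applied to ${\cal F} = {\cal F}(G)$.

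The only place requiring genuine care --- and the main, if modest, obstacle --- is verifying the induced-subgraph/complementation correspondence cleanly, that is, that passing to an induced subgraph on a vertex set $S$ and complementing may be carried out in either order with the same outcome. Once that commutation is stated and checked, both the set identity of part (2) and the feasibility equivalence follow formally, so the whole proposition reduces to these two elementary but essential observations about how the vertex count, the edge count, and the induced-subgraph relation behave under complementation.
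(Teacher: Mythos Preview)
Your proposal is correct and follows essentially the same approach as the paper: part (1) via the bijection $m\mapsto\binom{n}{2}-m$ together with the involution $\overline{\overline{{\cal F}}}={\cal F}$, and part (2) via the equivalence ``$H$ is induced $G$-free iff $\overline{H}$ is induced $\overline{G}$-free.'' The only cosmetic difference is that you explicitly package this equivalence as the set identity $\overline{{\cal F}(G)}={\cal F}(\overline{G})$ and then invoke part (1), whereas the paper simply states the contrapositive directly; the underlying argument is identical.
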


\begin{proof}
$\mbox{ }$
 \begin{enumerate}
\item{Suppose ${\cal F}$ is feasible:  given a pair $( n ,m)$  $n \geq 1$, $0 \leq m \leq \binom{n}{2}$, there is a graph  $G \in {\cal F}$  having $n$ vertices and  $m$ edges.  Clearly $\overline{G}$ has $n$ vertices and $\binom{n}{2} - m$ edges.  Then as $m$ increase  from $0$  to $\binom{n}{2}$, $\binom{n}{2}  - m$ decrease from $\binom{n}{2}$ to $0$.

Hence for every pair $(n,m)$  there exists $\overline{G} \in  \overline{{\cal F}}$   having $n$ vertices and $m$ edges.  The other direction is symmetric.}

\item{Suppose $H \in  {\cal F}(G)$ is induced $G$-free. Consider $\overline{H}$ ---  if it contains an induced copy of $\overline{G}$, then $H$ would contain an induced copy of $G$.  The other direction is symmetric.}
\end{enumerate}
\end{proof}

\begin{proposition} \label{propfeas3}
Let $G$ and $H$ be two graphs with $H$ an induced subgraph of $G$.  Let ${\cal F}$$(G)$ and ${\cal F}(H)$ be, respectively, the families of all induced $G$-free and induced $H$-free graphs .
 \begin{enumerate}
\item{If ${\cal F}(H)$  is feasible  then ${\cal F}(G)$ is feasible.}
\item{If ${\cal F}(G)$ is not feasible then ${\cal F}(H)$ is not feasible.}
\end{enumerate}
\end{proposition}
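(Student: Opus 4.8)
The plan is to reduce both statements to Proposition~\ref{prop1} by first establishing the single set inclusion ${\cal F}(H) \subseteq {\cal F}(G)$. The key fact I would invoke is the transitivity of the induced-subgraph relation: if $H$ is an induced subgraph of $G$ and $G$ is in turn an induced subgraph of some graph $K$, then $H$ is an induced subgraph of $K$. Indeed, restricting $K$ to the vertex set that realizes an induced copy of $G$, and then restricting further to the subset of those vertices that realizes $H$ inside $G$, produces an induced copy of $H$ in $K$, since taking induced subgraphs of induced subgraphs again gives an induced subgraph.

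From this I would argue the containment in contrapositive form. Suppose $K \in {\cal F}(H)$, i.e. $K$ has no induced copy of $H$. If $K$ contained an induced copy of $G$, then by the transitivity just described $K$ would also contain an induced copy of $H$, contradicting $K \in {\cal F}(H)$. Hence $K$ is induced $G$-free, so $K \in {\cal F}(G)$. This proves ${\cal F}(H) \subseteq {\cal F}(G)$.

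With the inclusion in hand, both parts follow immediately from Proposition~\ref{prop1}, applied with ${\cal H} = {\cal F}(H)$ and ${\cal F} = {\cal F}(G)$. Part~1 of Proposition~\ref{prop1} yields that feasibility of ${\cal F}(H)$ implies feasibility of ${\cal F}(G)$, which is exactly part~1 here; part~2 of Proposition~\ref{prop1} yields that non-feasibility of ${\cal F}(G)$ implies non-feasibility of ${\cal F}(H)$, which is part~2 here (and is just the contrapositive of part~1 in any case).

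I do not expect a genuine obstacle; the whole content is the inclusion ${\cal F}(H) \subseteq {\cal F}(G)$ and the observation that feasibility is monotone under containment. The only point requiring a moment of care is getting the direction of the inclusion right: since $H$ sits inside $G$, forbidding $H$ as an induced subgraph is the \emph{stronger} restriction, so the class of induced $H$-free graphs is the \emph{smaller} family, ${\cal F}(H) \subseteq {\cal F}(G)$. This matches the hypothesis and conclusion directions in the two parts of the statement.
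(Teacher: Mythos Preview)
Your proposal is correct and follows essentially the same approach as the paper: establish the inclusion ${\cal F}(H) \subseteq {\cal F}(G)$ via transitivity of the induced-subgraph relation (if $P$ contained an induced $G$ it would contain an induced $H$), then invoke Proposition~\ref{prop1}. The paper's proof is just a terser version of exactly this argument.
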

\begin{proof}
Observe that since $H$ is an induced subgraph of $G$, a graph $P$ which is induced $H$-free  is also induced $G$-free  because if $P$ contains an induced copy of $G$ then it must contain induced copy of $H$  in the induced copy of $G$.   Hence ${\cal F}(H)  \subset  {\cal F}(G)$  and we apply proposition \ref{prop1}.

\end{proof}

\subsection{  The Universal Elimination Process (UEP)  and its consequences}

The Universal Elimination Process (UEP), introduced in \cite{caro2023feasibility},  is a method which is used to delete edges systematically from a complete graph.  We describe UEP here again for the sake of being self-contained.  We start with $K_n$  and order the vertices $v_1,\ldots, v_n$.  We now delete at each step an edge incident with $v_1$ until $v_1$ is isolated.  We then repeat the process of step by step deletion of the edges incident with $v_2$, and continue until we reach  the empty graph on $n$ vertices. 

 Along the process, for any pair $( n,m)$, $ 0  \leq m \leq \binom{n}{2}$, we have  a graph $G$ with $n$ vertices and $m$ edges.
\begin{lemma}[\cite{caro2023feasibility}]

The maximal induced subgraphs of $K_n$ obtained when applying  UEP on $K_n$  are of the form {\color{black} $H(p,q,r) =  ( K_p  \backslash K_{1,q} )  \cup rK_1$},  $p-1 \geq q \geq 0$  and $p +r = n$.   
\end{lemma}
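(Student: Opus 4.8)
The plan is to track the graph produced by UEP through a single invariant indexed by two counters: the number $r$ of vertices already made isolated and the number $q$ of edges so far deleted from the vertex currently being processed. I claim that at every step of the process the current graph is exactly $H(p,q,r) = (K_p \backslash K_{1,q}) \cup rK_1$ with $p = n - r$, and I would prove this by induction on the number of edge deletions performed.

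For the base case, before any deletion the graph is $K_n = H(n,0,0)$, matching $r=0$, $q=0$, $p=n$. For the inductive step, suppose that after some deletions the vertices $v_1,\ldots,v_r$ are isolated, the vertices $v_{r+2},\ldots,v_n$ induce a complete graph, and $v_{r+1}$ has had $q$ of its remaining edges deleted; this is precisely $H(n-r,q,r)$. Since $v_{r+1}$ is joined only to vertices of $\{v_{r+2},\ldots,v_n\}$ (it has no edges to the already-isolated $v_1,\ldots,v_r$), it has $p-1-q = n-r-1-q$ remaining incident edges. Deleting one more edge incident to $v_{r+1}$ either raises the counter to $q+1$, giving $H(n-r,q+1,r)$, or, when $q = p-1 = n-r-1$, isolates $v_{r+1}$ entirely, at which point the state becomes $H(n-r-1,0,r+1)$; that is, $r$ increases by one, $q$ resets to $0$, and $p$ decreases by one. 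In all cases the form $H(p,q,r)$ is preserved, and the constraints follow immediately: $q$ ranges over $0 \leq q \leq p-1$ because $v_{r+1}$ starts with $p-1$ incident edges inside the current clique, while $p + r = n$ holds throughout.

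The one point requiring care, and the only genuine obstacle, is to verify that the edges among $\{v_{r+2},\ldots,v_n\}$ are never touched while $v_{r+1}$ is being processed, so that these vertices truly continue to induce a clique $K_{p-1}$ and the removed edges form exactly a star $K_{1,q}$ centered at $v_{r+1}$ rather than some other subgraph. This is guaranteed by the definition of UEP: at the stage when $v_{r+1}$ is processed, all edges incident with $v_1,\ldots,v_r$ have already been removed, so every deletion at this stage is forced to be an edge joining $v_{r+1}$ to the current clique. Hence the deleted edges share the common endpoint $v_{r+1}$ and form precisely a star. Finally, letting $q$ run from $0$ to $p-1$ and $r$ run from $0$ to $n-1$ shows that every admissible triple $(p,q,r)$ with $p-1 \geq q \geq 0$ and $p+r=n$ is realized during the process, which completes the characterization.
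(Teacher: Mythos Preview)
Your proof is correct and is simply a careful, explicit unpacking of what the paper dismisses in one line as ``immediate from the definition and description of UEP.'' The inductive tracking of the state $(p,q,r)$ is exactly the content of that sentence, so the approaches coincide.
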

 
\begin{proof}
This is immediate from the definition and description of UEP.
\end{proof}

Figure \ref{fig1} shows examples of $H(p,q,r)$ graphs.

\begin{figure}[H]
\centering
\includegraphics[scale=1]{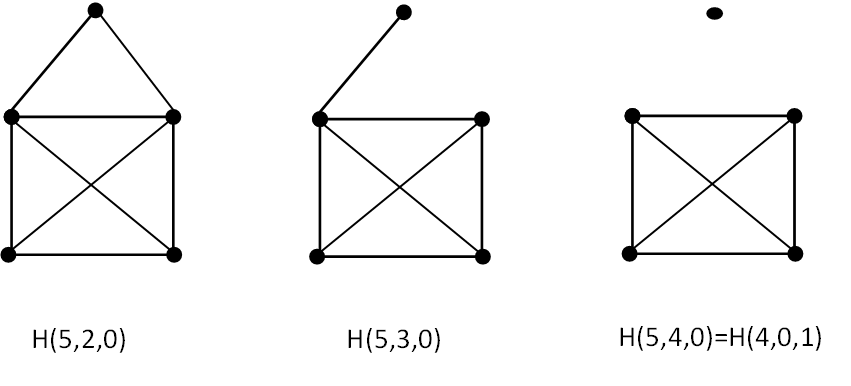} 
\caption{Examples of $H(p,q,r)$ graphs}
\label{fig1}
\end{figure}

Already the  UEP supplies many feasible families as summarized in the following corollary.

\begin{corollary} [\cite{caro2023feasibility}]
The following families of graphs obtained by  applying the  UEP  are feasible:
\begin{enumerate}
\item{ induced  $K_{1,r}$-free for $r\geq 3$, where $K_{1,r}$ is the star with $r$ leaves.}
\item{  induced  $P_r$-free  for $r \geq 3$, where $P_r$ is the path on $r$ edges.}
\item{ induced  $rK_2$-free for $r \geq 2$ where $rK_2$ is the union of $r$ disjoint edges.} 
%\item{chordal ( for definition of chordal graphs please see \cite{west2001}).}
\end{enumerate}   
\end{corollary}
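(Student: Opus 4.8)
The plan is to leverage the two facts already in hand. The first is the structural lemma, which says that every graph produced by UEP on $K_n$ is isomorphic to some $H(p,q,s) = (K_p \backslash K_{1,q}) \cup sK_1$ with $p-1 \ge q \ge 0$ and $p+s = n$ (I write $s$ for the number of isolated vertices to keep $r$ free for the forbidden graphs). The second is the observation that along the UEP we meet a graph with exactly $n$ vertices and $m$ edges for every $0 \le m \le \binom{n}{2}$. Together these reduce feasibility of an induced-$X$-free family to a single structural check: if \emph{every} $H(p,q,s)$ is induced $X$-free, then for each pair $(n,m)$ the corresponding UEP graph lies in ${\cal F}(X)$, and so ${\cal F}(X)$ is feasible. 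Moreover, by Proposition \ref{propfeas3} it is enough to treat the minimal forbidden graph in each family: $K_{1,3}$ for part (1), $P_3$ for part (2), and $2K_2$ for part (3), since these are induced subgraphs of $K_{1,r}\ (r\ge3)$, $P_r\ (r\ge3)$ and $rK_2\ (r\ge2)$ respectively, and feasibility propagates upward through the induced-subgraph relation.

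The key structural feature I would exploit is that in $H(p,q,s)$ the only non-edges inside the clique part $K_p \backslash K_{1,q}$ are the $q$ edges removed at a single vertex $w$; every other pair among the $p-1$ remaining clique vertices is adjacent, and the $s$ isolated vertices carry no edges at all. Thus all ``missing adjacencies'' are concentrated at the one vertex $w$ together with the trivially isolated vertices. Each forbidden graph, by contrast, demands several prescribed non-edges, and the heart of the argument is simply that a single special vertex cannot supply them all.

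Carrying this out case by case: for $K_{1,3}$ I would verify directly that the largest induced star in $H(p,q,s)$ is $K_{1,2}$. Indeed, a star centred at $w$ sees only its clique-neighbours, which are mutually adjacent, so it has at most one leaf; and a star centred at a clique vertex can have as mutually non-adjacent leaves at most $w$ together with one non-neighbour of $w$ (all other clique vertices being pairwise adjacent), hence at most two leaves. So no induced $K_{1,3}$ occurs. For $P_3$ (three edges, hence three prescribed non-edges among its four vertices) and for $2K_2$ (four prescribed non-edges among four vertices), I would use that every non-edge of the clique part is incident to $w$: if $w$ is one of the four chosen vertices it lies in too few of the required non-adjacent pairs, forcing one demanded non-edge to be a clique edge, a contradiction; and if $w$ is not among the four they induce a clique, with no non-edge at all. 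For $2K_2$ I would additionally observe that all its vertices have degree one, so none can be an isolated vertex of $H(p,q,s)$, which confines the copy to the clique part.

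The step I expect to be the only real obstacle is the bookkeeping in the disconnected case $2K_2$. Unlike $K_{1,3}$ and $P_3$, which are connected and therefore automatically confined to the single non-trivial component, $2K_2$ is disconnected, so I must rule out a priori the possibility of spreading its two edges across the component and the isolated part, or of using isolated vertices as endpoints. Once the degree-one observation pins all four vertices inside $K_p \backslash K_{1,q}$, the non-edge counting argument closes the case. The three base cases, combined with Proposition \ref{propfeas3}, then yield the corollary.
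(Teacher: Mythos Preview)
Your proposal is correct and follows the same approach the paper intends: show that every graph $H(p,q,s)$ produced by UEP avoids the forbidden induced subgraph, and conclude feasibility. The paper's own proof is the single line ``This is immediate from the definition and description of UEP'', so you have simply supplied the structural verification (the non-edge counting at the special vertex $w$, and the reduction to the minimal cases $K_{1,3}$, $P_3$, $2K_2$ via Proposition~\ref{propfeas3}) that the paper leaves to the reader.
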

\begin{proof}
This is immediate from the definition and description of UEP.
\end{proof}

\begin{definition}
For non-negative integers $p$ and $r$, $p+r  \geq 2$, let $S(p,r)$ denote the complete split graph   $K_p + \overline{K_r}$,  namely a clique $K_p$ and an independent set  $\overline{K_r}$ and all edges between the vertices in $K_p$ and $\overline{K_r}$. 
\end{definition} 

Observe that $S(p,r) = \overline{H(r ,0 ,p)}$.  We give some results related to the feasibility of $F(S(p,r))$.

 \begin{lemma} \label{feasSpr}
The feasibility of ${\cal F}(S(p,r))$:
\begin{enumerate}
\item{For $p = 0$  or $r = 0 $,  ${\cal F}(S(p,r))$ is not feasible.}
\item{For $p \geq 1$, $r \in \{ 1 ,2\}$,  ${\cal F}(S(p,r))$ is not feasible.}  
\item{For $p \geq 1$, $r \geq 3$,  ${\cal F}(S(p,r))$ is feasible.}
\end{enumerate}
\end{lemma}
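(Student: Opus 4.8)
The plan is to dispatch the two non-feasibility claims (parts 1 and 2) by recognizing $S(p,r)$ as a member of TNF in each regime, and to prove the feasibility claim (part 3) by showing that the whole run of the UEP on $K_n$ stays inside ${\cal F}(S(p,r))$. For part 1 I would first unwind the join: since $K_0$ is the null graph, $S(0,r) = \overline{K_r}$ and $S(p,0) = K_p$, and the side condition $p+r \geq 2$ forces $r \geq 2$ (resp.\ $p \geq 2$), so each is a genuine TNF member and hence trivially non-feasible. Concretely, the empty graph on $n \geq r$ vertices contains an induced $\overline{K_r}$, so the pair $(n,0)$ is unrealizable, and dually $K_n \supseteq K_p$ for $n \geq p$ kills $(n,\binom{n}{2})$. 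For part 2 I would compute the two joins explicitly: adjoining one universal vertex gives $S(p,1) = K_p + K_1 = K_{p+1}$, while adjoining two non-adjacent universal vertices gives $S(p,2) = K_{p+2}\backslash K_2$, the only missing edge being the one inside $\overline{K_2}$. Both lie in TNF, so ${\cal F}(S(p,r))$ is non-feasible; equivalently, $(n,\binom{n}{2})$ and $(n,\binom{n}{2}-1)$ are the offending infeasible pairs.

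For part 3, where $p \geq 1$ and $r \geq 3$, the key is that every intermediate graph produced by the UEP lies in ${\cal F}(S(p,r))$. By the Lemma, running the UEP on $K_n$ passes through graphs of the form $H(a,b,c) = (K_a\backslash K_{1,b}) \cup cK_1$ and, since a single edge is deleted at each step, realizes every pair $(n,m)$ with $0 \leq m \leq \binom{n}{2}$. It therefore suffices to check that no $H(a,b,c)$ contains an induced $S(p,r)$. Suppose one did: its $\overline{K_r}$ part is an independent set of size $r \geq 3$, each vertex of which is adjacent to all (at least one, since $p \geq 1$) vertices of the $K_p$ part. The vertices of the $cK_1$ block have degree $0$, so they can lie in neither the $K_p$ part nor, having no neighbour, the $\overline{K_r}$ part; hence the entire copy of $S(p,r)$ sits inside $K_a\backslash K_{1,b}$. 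But the non-edges of $K_a\backslash K_{1,b}$ form a single star, so any three of its vertices span an edge and its independence number is at most $2 < r$, contradicting the existence of the $\overline{K_r}$. Thus ${\cal F}(S(p,r))$ realizes every pair $(n,m)$ and is feasible.

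The routine parts are the two join computations and the TNF bookkeeping; the only point needing care is the structural claim in part 3 that an induced $S(p,r)$ with $r \geq 3$ cannot be accommodated by $H(a,b,c)$. The crux is the twin observation that (i) the fully-joined independent set $\overline{K_r}$ forbids using any isolated vertex once $p \geq 1$, forcing the copy into the clique block, and (ii) that block has independence number exactly $2$ — which is precisely why the threshold sits at $r = 3$ and why the cases $r \in \{1,2\}$ must instead be handled as TNF members rather than via the UEP.
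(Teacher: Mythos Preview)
Your proof is correct and follows essentially the same approach as the paper: parts 1 and 2 identify $S(p,r)$ as a TNF member, and part 3 shows that every UEP graph $H(a,b,c)$ is $S(p,r)$-free. The only cosmetic difference is that the paper handles part 3 by invoking the already-established fact that UEP graphs are $K_{1,r}$-free (and $K_{1,r}$ is induced in $S(p,r)$ for $p\ge 1$), whereas you give the direct independence-number argument inside $K_a\backslash K_{1,b}$; these are the same observation unpacked differently.
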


\begin{proof}
$\mbox{ }$\\
\begin{enumerate}
\item{ This is because $S(p,r)$  is  either  $K_p$ or $\overline{K_r}$ which and clearly ${\cal F}(G)$ is not feasible when $G=K_k$ or $G=\overline{K_k}$.}
\item{This is because $S(p,1) = K_{p+1}$ and $S(p,2) =  K_{p+1} \backslash K_2$, and again ${\cal F}(G)$ is  clearly not feasible when $G=K_k \backslash K_2$ .}
\item{This is because $S(1,r)  = K_{1,r}$ and we already proved in \cite{caro2023feasibility} (and mentioned before)  that  ${\cal F}(K_{1,r})$ is feasible for $r \geq 3$.  Also since the UEP produces  $K_{1,r}$-free graphs for $r \geq 3$,  it follows  that for $p \geq 2$, $r \geq 3$, $S(p,r)$  is not an induced subgraph in any graph obtained by the UEP.   }
\end{enumerate}
\end{proof}  

The following is an immediate application of Proposition \ref{propfeas3}  and the fact that $S(p,r) =  \overline{H(r, 0, p )}$, as well as  Lemma \ref{feasSpr} by replacing the role of $r$ and $p$ due to complementation.
 
\begin{corollary} \label{cory}

The feasibility of ${\cal F}(H(p,0,r))$.
\begin{enumerate}
\item{For $p = 0$  or $r = 0$,  ${\cal F}(H(p,0,r))$ is not feasible.} 
\item{For $p \in \{ 1 ,2 \}$  and $r \geq 1$,   ${\cal F}(H(p,0,r))$ is not feasible.}  
\item{For $p \geq 3$, $ r \geq 1$, ${\cal F}(H(p,0,r))$ is feasible.}
\end{enumerate}
\end{corollary}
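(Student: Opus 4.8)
The plan is to reduce the whole statement to the already-established Lemma~\ref{feasSpr} via complementation, as the remark preceding the corollary suggests. First I would invoke Proposition~\ref{prop2}, part~2, which says that for any graph $G$ the family ${\cal F}(G)$ is feasible if and only if ${\cal F}(\overline{G})$ is feasible. Taking $G = H(p,0,r)$, every assertion about ${\cal F}(H(p,0,r))$ is therefore equivalent to the corresponding assertion about ${\cal F}(\overline{H(p,0,r)})$.

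The second step is to identify the complement explicitly. Since $H(p,0,r) = K_p \cup rK_1$ is a clique on $p$ vertices together with $r$ isolated vertices, its complement turns the clique into an independent set, turns the isolated vertices into a clique, and joins the two parts completely; hence $\overline{H(p,0,r)} = K_r + \overline{K_p} = S(r,p)$. This is exactly the identity $S(p,r) = \overline{H(r,0,p)}$ recorded in the excerpt, read with the roles of $p$ and $r$ interchanged. Consequently ${\cal F}(H(p,0,r))$ is feasible if and only if ${\cal F}(S(r,p))$ is feasible.

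Finally I would apply Lemma~\ref{feasSpr} to $S(r,p)$. The one point needing care --- and really the only place an error could creep in --- is that passing to the complement swaps the two parameters: in Lemma~\ref{feasSpr} the first argument of $S(\cdot,\cdot)$ is the clique size and the second is the independent-set size, so applying it to $S(r,p)$ means reading ``clique size $=r$'' and ``independent-set size $=p$'', thereby interchanging $p$ and $r$ in each of the lemma's three conditions. Carrying this out, Lemma~\ref{feasSpr}(1) (non-feasible when a parameter is $0$) yields non-feasibility for $p=0$ or $r=0$; Lemma~\ref{feasSpr}(2) (non-feasible when the independent-set parameter is in $\{1,2\}$) yields non-feasibility for $p\in\{1,2\}$, $r\geq 1$; and Lemma~\ref{feasSpr}(3) (feasible when the independent-set parameter is $\geq 3$) yields feasibility for $p\geq 3$, $r\geq 1$. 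These are precisely the three asserted cases, so beyond correctly bookkeeping the parameter interchange there is no genuine obstacle.
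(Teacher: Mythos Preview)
Your proposal is correct and follows essentially the same approach as the paper: both reduce Corollary~\ref{cory} to Lemma~\ref{feasSpr} via the identity $\overline{H(p,0,r)}=S(r,p)$ and complementation (Proposition~\ref{prop2}). The only cosmetic difference is that the paper's written proof handles parts~1--2 by directly identifying $H(p,0,r)$ as a member of TNF (a clique, an independent set, or $K_2\cup\overline{K_r}$), reserving the complementation argument for part~3, whereas you carry out all three parts uniformly through the parameter swap---which is in fact exactly what the paper's preamble to the corollary advertises.
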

\begin{proof}
$\mbox{ }$\\
\begin{enumerate}
\item{This is because $H(p,0,r)$ in this case is a clique or an independent set.}
\item{This is because $H(p,0,r)$ in this case is the independent set $\overline{K_{r+1}}$  for $p  = 1$, and  $K_2 \cup \overline{K_r}$ for $p=2$,  both members of TNF.}
\item{This follows by complementation.}
\end{enumerate}
\end{proof}

This elimination method, however, does not work in the case of a family ${\cal F}(G)$ of induced $G$-free graphs when $G$ is of the form $H(p,q,r)$.  In \cite{caro2023feasibility}, the authors prove that the family of paw-free graphs is feasible, where the paw graph is isomorphic to $H(4,2,0)$.  They use a different edge elimination technique, which we develop and extend in the next section.

\subsection{$  \{K_3,K_2\}$-elimination  and its consequences.}
\begin{lemma} \label{K3K2}
\cal{{\cal F}}or $n\geq 2$ and $0 \leq t \leq  n-2$, there are  integers $x ,y \geq 0$ such that $3x +y = t$  and $xK_3 \cup yK_2$ is a subgraph of $K_n$.   
\end{lemma}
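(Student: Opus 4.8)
The plan is to read the two requirements separately. A copy of $xK_3 \cup yK_2$ is a disjoint union of $x$ triangles and $y$ edges, so it has exactly $3x+y$ edges and exactly $3x+2y$ vertices. Since any prescribed disjoint union of cliques embeds as a subgraph of $K_n$ as soon as it has at most $n$ vertices (just select distinct vertices for each component), the statement reduces to finding $x,y \geq 0$ with
\[
3x+y = t \qquad\text{and}\qquad 3x+2y \leq n.
\]
The first equation fixes the number of edges; the second is the only genuine constraint, namely that the gadget fits inside $K_n$.

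The key idea is to make $y$ as small as possible, since for a fixed edge count $t=3x+y$ the vertex count $3x+2y$ increases with $y$. First I would therefore choose $x$ and $y$ by Euclidean division of $t$ by $3$, setting $x = \lfloor t/3 \rfloor$ and $y = t - 3x$, so that $y \in \{0,1,2\}$. This makes the edge equation $3x+y=t$ hold automatically, and both $x,y$ are non-negative integers.

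Next I would compute the vertex count under this choice. Since $3x = t-y$, we get
\[
3x + 2y = (t-y) + 2y = t + y.
\]
Now I would invoke the hypotheses $y \leq 2$ and $t \leq n-2$ to conclude that the number of vertices satisfies $3x+2y = t+y \leq (n-2)+2 = n$, so $xK_3 \cup yK_2$ indeed embeds in $K_n$. This completes the verification for all $t$ in the required range, and also covers the trivial endpoint $t=0$ (the empty graph).

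There is essentially no serious obstacle here: the only thing one must get right is the choice of $(x,y)$, and the whole point is that taking $y$ to be the remainder of $t$ modulo $3$ minimizes the vertex consumption, turning the constraint $3x+2y \leq n$ into the slack inequality $t+y \leq n$ that the bound $t \leq n-2$ was designed to guarantee. If instead one allowed larger $y$, the vertex count $t+y$ could exceed $n$, so the minimality of $y$ is exactly what the proof exploits.
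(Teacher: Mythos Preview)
Your proof is correct and considerably cleaner than the paper's. The paper proceeds by induction on $n$: it verifies the small cases $n\in\{2,3,4,5\}$ by hand, then for $n\ge 6$ writes $n=3k+r$ with $r\in\{0,1,2\}$, uses the inductive hypothesis on $K_{n-1}$ to handle $0\le t\le n-3$, and exhibits an explicit $(x,y)$ for the boundary value $t=n-2$ in each of the three residue classes. You instead give a single closed-form choice $x=\lfloor t/3\rfloor$, $y=t-3x\in\{0,1,2\}$, and verify directly that the vertex count is $3x+2y=t+y\le (n-2)+2=n$. Your argument bypasses the induction and the case split entirely; the paper's version, while valid, obscures the simple arithmetic that actually drives the bound. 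One small advantage of the paper's case analysis is that it makes the specific packings visually explicit, which matches the style of the surrounding $\{K_3,K_2\}$-elimination discussion; your version trades that for brevity and a clearer explanation of \emph{why} the bound $t\le n-2$ is exactly what is needed.
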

\begin{proof}

Clearly this is true by direct checking for $n = 2$  with $(x,y) = (0 ,0)$, $n =3$  with $(x,y) \in \{(0,0),(0,1)\}$, $n=4$ with $(x,y) \in \{(0,0),(0,1),(0,2)\}$ and $n = 5$ with $(x,y) \in \{(0,0),(0,1),(0,2)(1,0)\}$.

 So assume $n \geq 6$ and write $n = 3k +r$, $0 \leq r \leq 2$ and $k \geq 2$.  We consider three cases:
\begin{enumerate}
\item{When $r =  0$, $n = 3k$.  For $0\leq t \leq  n-3$  we shall consider $K_{n-1}$ in $K_n$ and by  induction any $t$  in this range can be represented by $xK_3 \cup yK_2$ as a subgraph of $K_{n-1}$ hence of $K_n$. 

So we only need to consider $t = n-2$.   We take $(k-1)K_3$  ($k \geq 2$) that covers $3k-3$ vertices hence from the remaining three vertices forming $K_3$ we can choose $K_2$ and we get $(k-1)K_3 \cup K_2$ on $3k-2  = n-2$ edges.  }
\item{When $r  = 1$, $n = 3k +1$.   As above, for $0\leq t \leq  n-3$ we shall apply  induction on $ n-1$ vertices.  So we need consider only  $t= n -2  = 3k-1$.  We take $( k-1)K_3$ that cover $3k - 3$ vertices and from the remaining 4 vertices forming $K_4$ we choose $2K_2$ and get $( k-1)K_3 \cup 2K_2$ on $3k  - 1 = n-2$ edges.  }  
\item{When $r  = 2$, $n = 3k +2$.   As above, for $0\leq t \leq  n-3$ we shall apply  induction on $ n-1$ vertices.  So we need consider only  $t= n -2  = 3k$.  We take $kK_3$ that cover $3k$ vertices and get $3k = n-2$ edges.  }  
\end{enumerate}
\end{proof}

The $\{K_3,K_2\}$-elimination process  is described as follows:  starting from $K_n$,  for every  $0 \leq t \leq n-2$, delete edges in the form $xK_3 \cup yK_2$ such that $3x +y = t$.  Once this is done , we have covered all the range $[ \binom{n-1}{2}  +1, \ldots, \binom{n}{2} ]$.  Consider now $K_{n-1} \cup K_1$ (obtained by deleting a star $K_{1,n-1}$) and apply the $\{K_3,K_2\}$-elimination process on $K_{n-1}$ and continue until all edges are deleted.  Once again observe that this process covers all possible numbers of edges in the range $[0, \binom{n}{2}]$.

Observe that the graphs obtained through this elimination process are of the form $Q(p,r,x,y )  =   (K_p \backslash \{ xK_3 \cup yK_2 \}) \cup \overline{K_r} $ for $p,r, x ,y \geq 0$ and $0 \leq3x + y \leq p$ and  $p +r  = n$.

\begin{lemma}
The graphs obtained through the $(K_3,K_2)$-elimination process are  $H(p,q,r)$-free  for  $p \geq 4$, $r \geq 0$,  $2 \leq q \leq p-2$.  In particular the family ${\cal F}$ of all $H(p,q,r)$-free graphs is feasible whenever $p\geq 4$, $r \geq 0$,  $2 \leq q \leq p-2$.
\end{lemma}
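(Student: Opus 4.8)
The plan is to argue entirely in terms of the \emph{non-adjacency} (complement) structure, since an induced copy of a graph must match both the adjacencies and the non-adjacencies. To avoid clashing with the parameters of the forbidden graph, I write a typical output of the process as $Q = (K_a \setminus \{xK_3 \cup yK_2\}) \cup \overline{K_b}$ with $a+b=n$, and I let $C := K_p \setminus K_{1,q}$ denote the unique non-trivial component of $H(p,q,r) = C \cup rK_1$. The goal of the first assertion is to show $C$ is not an induced subgraph of any such $Q$, from which $H(p,q,r)$-freeness follows immediately.

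First I would record two structural facts. Since $q \leq p-2$, the centre $c$ of the deleted star retains degree $p-1-q \geq 1$ in $C$, so $C$ is connected on its $p \geq 4$ vertices; consequently any induced copy of $C$ inside $Q$ must lie entirely within the clique-part $Q' := K_a \setminus \{xK_3 \cup yK_2\}$, because the $b$ vertices of $\overline{K_b}$ are isolated in $Q$ and cannot belong to a connected induced subgraph that has an edge. Second, in $C$ the missing edges form exactly a star $K_{1,q}$: the centre $c$ has precisely $q$ non-neighbours $\ell_1,\dots,\ell_q$, and these $q$ leaves are pairwise adjacent in $C$. By contrast, in $Q'$ the missing edges form a disjoint union of $x$ triangles and $y$ single edges, so every vertex of $Q'$ has \emph{at most two} non-neighbours, and whenever a vertex has two non-neighbours they are the other two vertices of one deleted triangle and are therefore themselves non-adjacent in $Q'$.

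The contradiction then falls out as follows. Suppose $C$ embeds as an induced subgraph of $Q'$. The image of $c$ has $q \geq 2$ non-neighbours inside the image set, so the bound above forces $q = 2$, and the image of $c$ must sit in a deleted triangle whose remaining two vertices are the images of $\ell_1$ and $\ell_2$. But those two vertices are non-adjacent in $Q'$, whereas $\ell_1\ell_2$ is an edge of $C$, violating the induced condition. Hence no induced copy of $C$, and so no induced copy of $H(p,q,r)$, occurs in any $Q$. Note that for $q \geq 3$ the counting bound alone already suffices; the only delicate point is the boundary value $q = 2$, which is exactly where the observation that paired non-neighbours must be mutually non-adjacent is needed.

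For the \emph{in particular} statement I would invoke the description of the process given just before the lemma: the $\{K_3,K_2\}$-elimination process produces, for every $n$ and every $0 \leq m \leq \binom{n}{2}$, a graph of the form $Q$ on $n$ vertices with exactly $m$ edges (the small cases $n \leq p+r-1$ being trivially $H(p,q,r)$-free on vertex-count grounds). Since all of these graphs are $H(p,q,r)$-free by the first part, the family ${\cal F}$ of all $H(p,q,r)$-free graphs contains a member for every admissible pair $(n,m)$ and is therefore feasible. The main obstacle here is conceptual rather than computational: the decisive manoeuvre is to reformulate ``no induced $H(p,q,r)$'' as an incompatibility between the star-shaped non-adjacency pattern of $C$ and the triangle-and-edge non-adjacency pattern that the elimination process necessarily imposes.
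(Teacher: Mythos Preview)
Your proof is correct and follows essentially the same approach as the paper's: both arguments work by contrasting the complement (non-adjacency) pattern of $H(p,q,r)$, which is a star $K_{1,q}$ on the clique part, with the complement pattern of a $\{K_3,K_2\}$-elimination output, which is a disjoint union of triangles and single edges. Your version is in fact more carefully argued than the paper's rather terse ``by comparing the structure'' proof; in particular you make explicit the max-degree-two bound in the complement of $Q'$ and you handle the boundary case $q=2$ cleanly by observing that two non-neighbours of a common vertex in $Q'$ must themselves be non-adjacent, contradicting the adjacency of the star leaves in $C$.
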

 
\begin{proof}

The proof is by comparing the structure of $Q(p,r,x,y)$ versus $H(p,q,r)$ graphs .

The only cases where  $Q(p,r,x,y) = H(p,q,r)$, ($3x +y  = q$) are  $q= 0  ,1$,  where the connected part is either $K_p$  or $K_p \backslash K_2$, since the $q$ edges in $H(p,q,r)$ are deleted via deletion of a star on $q$ edges.   In all other cases  $Q(p,r,x,y)$ graphs are $H(p,q,r)$-free graphs  and since $Q(p,r,x,y)$ with  $p+r  = n$  covers  all possible values of $m$ in the range $0  \leq m \leq \binom{n}{2}$ via the $\{K_3,K_2\}$-elimination process, it follows that  for fixed  $p \geq 4$, $r$, $2 \leq q \leq  p-2$, $r \geq 0$, the family ${\cal F}$  of all  $H(p,q,r)$-free  graphs is feasible.

 \end{proof}

\section{Concluding the proof of the main Theorem}

We shall now complete the proof of the main Theorem.  Observe that by the UEP and $\{K_3,K_2\}$-elimination process together with the determination of the feasibility of ${\cal F}(H(p,0,r))$ in section 2, what remains to consider is the feasibility of ${\cal F}(G)$ where $G$ is an $H(p,q,r)$ graph with $p \in \{2,3\}$  and in the case when $p \geq 4$  and  $q = 1$  or $q = p-1$  with $r \geq 0$ (and their complements which follow by Proposition \ref{prop2}).

\begin{proposition}\label{prop31}
 The case  $p = 2$.
\end{proposition}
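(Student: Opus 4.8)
The plan is to observe that when $p=2$ the graph $H(p,q,r)$ degenerates into a member of TNF, so that here the conclusion to be established is non-feasibility, in agreement with the main theorem. First I would note that the constraint $p-1 \geq q \geq 0$ forces $q \in \{0,1\}$ when $p=2$, leaving exactly two subcases to analyse.

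Next I would evaluate each subcase directly from the definition $H(p,q,r) = (K_p \backslash K_{1,q}) \cup rK_1$. For $q=0$ no edge is removed, so $H(2,0,r) = K_2 \cup \overline{K_r}$; for $q=1$ the unique edge of $K_2$ is deleted, so $H(2,1,r) = \overline{K_2} \cup rK_1 = \overline{K_{r+2}}$. I would then identify both graphs with members of TNF: $H(2,1,r) = \overline{K_{r+2}}$ is exactly $\overline{K_k}$ with $k = r+2 \geq 2$, while $H(2,0,r) = K_2 \cup \overline{K_r} = \overline{K_{r+2} \backslash K_2}$ is the complement of $K_{r+2} \backslash K_2$, again with $k = r+2 \geq 2$.

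Having placed both graphs in TNF, the conclusion is immediate: as recorded in the introduction (and, for $q=0$ with $r \geq 1$, directly via Corollary \ref{cory}(2)), ${\cal F}(G)$ is not feasible whenever $G \in$ TNF. For completeness I would exhibit the witnessing infeasible pairs. When $G = \overline{K_{r+2}}$, the pair $(n,0)$ with $n \geq r+2$ is infeasible, since the only $n$-vertex graph with no edges is $\overline{K_n}$, which contains an induced $\overline{K_{r+2}}$. When $G = K_2 \cup \overline{K_r}$, the pair $(n,1)$ with $n \geq r+2$ is infeasible, since the unique $n$-vertex graph with a single edge, namely $K_2 \cup \overline{K_{n-2}}$, contains an induced $K_2 \cup \overline{K_r}$. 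Since the entire argument reduces to correctly identifying these two degenerate graphs (their complements being handled automatically by Proposition \ref{prop2}), there is no genuine obstacle here; the role of the case $p=2$ is simply to confirm that it contributes no graph outside TNF, so that the exceptional set in the main theorem is not enlarged.
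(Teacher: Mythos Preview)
Your proposal is correct and follows essentially the same approach as the paper: split into $q=0$ and $q=1$, compute $H(2,q,r)$ explicitly, and observe that both graphs lie in TNF. The paper's proof is a single sentence doing exactly this (without your additional witnessing pairs); note that your computation $H(2,1,r)=\overline{K_{r+2}}$ is in fact more accurate than the paper's stated $\overline{K_{r+1}}$, though the conclusion is unaffected.
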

 \begin{proof}

Observe that $p=2$  gives either $H(2,0,r)  =  K_2 \cup  \overline{K_r}$, or $H(2,1,r)  =  \overline{K_{r +1}}$  which belong to the family TNF.
\end{proof}

\begin{proposition} 
The case $p = 3$.
\end{proposition}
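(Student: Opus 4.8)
The plan is to split according to the value of $q$, which for $p=3$ can only be $0$, $1$, or $2$ (since $0 \le q \le p-1 = 2$), and to verify in each case that $\mathcal{F}(H(3,q,r))$ is feasible exactly when $H(3,q,r) \notin \mathrm{TNF}$. Two of the three values of $q$ require no new work. For $q=0$ we have $H(3,0,r) = K_3 \cup \overline{K_r}$, which equals $K_3 \in \mathrm{TNF}$ when $r=0$ and is feasible for every $r \ge 1$ by Corollary \ref{cory}. For $q=2$ the graph $K_3 \backslash K_{1,2}$ is a single edge together with an isolated vertex, so $H(3,2,r) = K_2 \cup \overline{K_{r+1}} = \overline{K_{r+3}\backslash K_2}$ lies in $\mathrm{TNF}$ for every $r \ge 0$; hence $\mathcal{F}(H(3,2,r))$ is trivially non-feasible.

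The only genuine case is $q=1$, where $H(3,1,r)$ consists of the path $K_3 \backslash K_{1,1}$ on three vertices together with $r$ isolated vertices. When $r=0$ this is $K_3 \backslash K_2 \in \mathrm{TNF}$, so non-feasibility is immediate. For $r \ge 1$ I would avoid building a fresh construction and instead reduce to a family already shown feasible, using the complementation principle of Proposition \ref{prop2}(2). The key step is the complement identity $\overline{H(3,1,r)} = H(r+3,2,0)$: the two edges meeting at the central vertex of the path become, in the complement, the two missing edges of a star $K_{1,2}$ deleted from $K_{r+3}$, while the $r$ formerly isolated vertices become a clique that is joined to all of the remaining vertices.

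With this identity established, the conclusion is immediate. The graph $H(r+3,2,0)$ has first parameter $p' = r+3 \ge 4$ and middle parameter $q' = 2$ satisfying $2 \le q' \le p'-2 = r+1$ (using $r \ge 1$), so it lies exactly in the range covered by the $\{K_3,K_2\}$-elimination lemma, whence $\mathcal{F}(H(r+3,2,0))$ is feasible. Proposition \ref{prop2}(2) then transfers feasibility back to $\mathcal{F}(H(3,1,r))$, which finishes the case $p=3$.

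I expect the main obstacle to be essentially bookkeeping: verifying the complement identity $\overline{H(3,1,r)} = H(r+3,2,0)$ correctly and confirming that the resulting parameters land strictly inside the hypotheses $p' \ge 4$ and $2 \le q' \le p'-2$ of the elimination lemma, so that the boundary value $r=0$ (which yields the genuine $\mathrm{TNF}$ member $K_3 \backslash K_2$) is excluded rather than swept into the feasible range. Once this reduction is in place, no new edge-elimination argument is needed for $p=3$, and the dichotomy of the Main theorem holds for every $H(3,q,r)$.
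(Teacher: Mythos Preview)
Your proof is correct and follows essentially the same route as the paper: both split on $q\in\{0,1,2\}$, dispose of $q=0$ via Corollary~\ref{cory} and of $q=2$ by recognising a TNF member, and handle $q=1$, $r\ge 1$ by passing to the complement $\overline{H(3,1,r)}=H(r+3,2,0)$ and invoking the $\{K_3,K_2\}$-elimination lemma. Your treatment is in fact slightly cleaner than the paper's, which splits the $q=1$ case into $r=1$ (complement of the paw) and $r\ge 2$ separately, whereas you correctly observe that the parameters $p'=r+3\ge 4$, $q'=2\le p'-2$ already cover all $r\ge 1$ uniformly.
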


 \begin{proof}

Observe that $p = 3$ gives   $H(3,0 , r)$, $  H(3,1,r)$,  $H(3,2,r) = H(2,0,r+1)$. We consider each of these graphs:
\begin{enumerate}
\item{If $G = H(3,0,r)$ then if $r = 0$, $G = K_3$ belongs to TNF and ${\cal F}(G)$ is not feasible, while if $r \geq 1$  then by Corollary \ref{cory} part 2,  ${\cal F}(G)$ is feasible.  }
\item{If $G=  H(2,0,r+1)$, ${\cal F}(G)$ is not feasible by Proposition \ref{prop31}.}
\item{If $G =  H(3,1,r)$ then if $r = 0$,  $G =  K_3\backslash K_2$ which is  a member of TNF and hence not feasible.  If $r \geq 1$ then  $G = K_3\backslash K_2 \cup \overline{K_r}$.  When $r = 1$, $G$ is the complement of the paw-graph, i.e. $G=\overline{H(4,2,0)}$ which is feasible by Proposition \ref{prop2} and hence ${\cal F}(G)$ is feasible.  For $r  \geq 2$, $ G = \overline{K_{p+r} \backslash  K_{1,2}}$ which is feasible by $\{K_3,K_2 \}$-elimination since $p +r \geq 5$  and we can delete $2K_2$.}
\end{enumerate}
\end{proof}

\begin{proposition}
The case $p \geq 4$.
\begin{enumerate}
\item{For $ p \geq 4$ and $q = p-1$,  $H(p,p-1,r)$  is feasible for $r \geq 0$.}
\item{For $ p \geq 4$ and $q = 1$, $H(p,1,r)$  is feasible for $r \geq 1$ and not feasible for $r = 0$  (a member of TNF)}
\end{enumerate}
\end{proposition}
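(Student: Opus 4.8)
The plan is to handle the two items separately, reducing each boundary value ($q=p-1$ and $q=1$) to a case that was already settled in Section 2, rather than introducing a new elimination process. For item~1 I would first record the structural identity
\[
H(p,p-1,r) \;=\; K_{p-1}\cup\overline{K_{r+1}} \;=\; H(p-1,0,r+1),
\]
which holds because deleting the star $K_{1,p-1}$ from $K_p$ isolates its centre, leaving $K_{p-1}$ together with one fresh isolated vertex, and this vertex then merges with the $r$ extra isolated vertices to give $\overline{K_{r+1}}$. With the identity in hand item~1 is immediate: for $p\ge 4$ we have $p-1\ge 3$ and for $r\ge 0$ we have $r+1\ge 1$, so Corollary~\ref{cory}(3) applies verbatim to $H(p-1,0,r+1)$ and yields feasibility of ${\cal F}(H(p,p-1,r))$. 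This step is routine; the only care needed is the bookkeeping of the parameters.

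For item~2 the genuine difficulty is that $q=1$ is exactly the boundary value excluded from the $\{K_3,K_2\}$-elimination lemma (which needs $2\le q\le p-2$), and the connected part $K_p\backslash K_2$ of $H(p,1,r)$ is far too dense to contain any of our feasibility-generating graphs: it has a single non-edge, hence no induced $K_{1,3}$, $P_3$ or $2K_2$ (each of these would require at least two non-edges). The idea I would use to break the deadlock is to pass to the complement via Proposition~\ref{prop2} and then apply Proposition~\ref{propfeas3}. Writing $A$ for the $p$ vertices of $K_p\backslash K_2$ (with missing edge $a_1a_2$) and $B$ for the $r$ isolated vertices, the complement $\overline{H(p,1,r)}$ is a clique $K_r$ on $B$, an independent set on $A$ carrying the single edge $a_1a_2$, and a complete join between $A$ and $B$. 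The non-edge graph on $A$ is $K_2\cup\overline{K_{p-2}}$, whose largest independent set has size $p-1\ge 3$ for $p\ge 4$; choosing any three such vertices of $A$ together with all of $B$ induces exactly the complete split graph $S(r,3)=K_r+\overline{K_3}$.

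It then remains only to invoke the established results: ${\cal F}(S(r,3))$ is feasible by Lemma~\ref{feasSpr}(3) (clique part $r\ge 1$, independent part $3\ge 3$), so Proposition~\ref{propfeas3} gives feasibility of ${\cal F}(\overline{H(p,1,r)})$, and Proposition~\ref{prop2} transfers this back to feasibility of ${\cal F}(H(p,1,r))$. The leftover subcase $r=0$ gives $H(p,1,0)=K_p\backslash K_2\in\mbox{TNF}$, which is non-feasible, completing item~2. I expect the main obstacle to be conceptual rather than computational: recognising that the dense graph $H(p,1,r)$ is best attacked through its sparse complement, in which an induced $S(r,3)$ is readily visible. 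Once that move is made, every remaining step is a direct citation of an earlier proposition, lemma, or corollary.
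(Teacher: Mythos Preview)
Your proof is correct and follows essentially the same route as the paper: item~1 is handled identically via the identity $H(p,p-1,r)=H(p-1,0,r+1)$ and Corollary~\ref{cory}, while for item~2 both you and the paper pass to the complement $\overline{H(p,1,r)}$, locate an induced subgraph whose forbidden family is already known to be feasible, and then invoke Propositions~\ref{propfeas3} and~\ref{prop2}. The only cosmetic difference is that the paper spots the claw $K_{1,3}=S(1,3)$ in the complement (citing the UEP feasibility of claw-free graphs directly), whereas you spot the larger $S(r,3)$ and cite Lemma~\ref{feasSpr}(3); since $K_{1,3}$ is itself induced in your $S(r,3)$ and Lemma~\ref{feasSpr}(3) is proved via the claw anyway, the two arguments are equivalent.
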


 \begin{proof}
 
$\mbox{ }$\\
\begin{enumerate}
\item{If $p \geq 4$ and $q = p-1$ then $G= H(p,p-1,r) = H(p-1,0,r+1) = K_{p-1} \cup \overline{K_{r +1}}$  and we are done by Corollary \ref{cory}.}  
\item{If $p \geq 4$, then if $q = 1$, $G =  H(p ,1, r) =   K_p \backslash  K_2  \cup \overline{K_r}$.  If $r = 0$,  $H(p,1,0) = K_p\backslash K_2$ is a member of TNF hence ${\cal F}(G)$ is not feasible.  So we may assume that $r \geq 1$.  Recall that the family of claw-free graph is feasible by UEP.  For $p \geq 4$,  the complement  of  $H(p,1,r)$ with $r \geq 1$, $H$,  contains an induced claw.  So a  claw-free graph cannot have  $H$  as an induced graph, hence it is in particular $H$-free.  Since the family of all claw-free graphs is feasible and $H$-free,  it follows that the family of all $H$-free graphs (containing the family of claw-free graphs)  is feasible.  Therefore, applying Proposition  \ref{propfeas3} we get that the family of all  $H(p,1,r)$-free graphs with $r \geq 1$ is feasible. }
\end{enumerate}
\end{proof} 

Hence we have proved that ${\cal F}(G)$ is feasible if and only if  $G$ is not one of the graphs $ K_k$, $K_k\backslash K_2$, $\overline{K_k}$, $\overline{K_k\backslash K_2}$.

\section{Further Examples and Open Problems}

After the proof of the main Theorem, a natural question is the following: Suppose $G$ and $H$ are graphs such that ${\cal F}(G)$ and ${\cal F}(H)$ are both feasible families.  Is ${\cal F}(G,H)$, the family of all  graphs which are simultaneously  induced $G$-free and induced $H$-free, necessarily feasible ?

We know if both $G$ and $H$ are not $H(p,q,r)$ graphs then ${\cal F}(G,H)$ is feasible by UEP.  Since $H(p,q,r)$ graphs on three vertices belong to the  TNF family,  then the smallest interesting case is $H(4,2,0)$, the  Paw graph.

The following answers the above question negatively despite the fact that ${\cal F}(Paw)$ and ${\cal F}(K_{1,3})$ are both feasible families as proved in \cite{caro2023feasibility}. 
\begin{theorem}

${\cal F}(Paw , Claw )= {\cal F}(H(4,2,0),K_{1,3})$  is not a feasible family.  Also  ${\cal F}(P_3 \cup K_1 , K_3 \cup K_1)$ is not feasible.
\end{theorem}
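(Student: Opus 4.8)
The plan is to deduce the second statement from the first by complementation, and to prove the first by describing the structure of $\{$paw, claw$\}$-free graphs. For the reduction, I would first record that $\overline{K_{1,3}} = K_3\cup K_1$ and $\overline{H(4,2,0)} = P_3\cup K_1$: the paw $H(4,2,0)=K_4\setminus K_{1,2}$ has exactly two non-edges, both at its degree-one vertex, so its complement is a $P_3$ together with an isolated vertex (the complement of its degree-three vertex). A graph $H$ is simultaneously induced-$(P_3\cup K_1)$-free and induced-$(K_3\cup K_1)$-free if and only if $\overline H$ is simultaneously induced-$H(4,2,0)$-free and induced-$K_{1,3}$-free, and $G\mapsto\overline G$ is an edge-count-reversing bijection on $n$-vertex graphs. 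Hence the two-forbidden-graph analogue of Proposition \ref{prop2} gives that ${\cal F}(P_3\cup K_1,K_3\cup K_1)$ is feasible if and only if ${\cal F}(H(4,2,0),K_{1,3})$ is, so it suffices to show the latter is not feasible.

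Next I would pin down the components of a graph in ${\cal F}(H(4,2,0),K_{1,3})$. The claim is that every connected component is either a path or a cycle, or else a complete graph minus a matching, $K_t\setminus M$ with $|M|\le\lfloor t/2\rfloor$. The key input is Olariu's theorem that a paw-free graph has each component triangle-free or complete multipartite. In a triangle-free component the neighbourhood of any vertex is independent, so claw-freeness forces maximum degree $\le 2$, yielding a path or a cycle. In a complete multipartite component, a part of size $\ge 3$ provides three pairwise non-adjacent vertices with a common neighbour (the component being connected and so having at least two parts), hence an induced claw; thus all parts have size $\le 2$, which is exactly $K_t\setminus M$.

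From this the edge counts follow by a short computation. A path or cycle on $t$ vertices has at most $t$ edges, while $K_t\setminus M$ has $\binom t2-|M|\ge\binom t2-\lfloor t/2\rfloor$ edges, so a connected member on $t$ vertices has at most $t$ edges or at least $\binom t2-\lfloor t/2\rfloor$ edges. A disconnected member on $n$ vertices with component sizes $a_i$ has at most $\sum_i\binom{a_i}2\le\binom{n-1}2$ edges, the maximum being attained at the partition $(n-1,1)$ since $(a-1)(b-1)\ge0$ gives $\binom a2+\binom b2\le\binom{a+b-1}2$. Consequently the only edge counts attainable in the interval $(\binom{n-1}2,\binom n2]$ come from the graphs $K_n\setminus M$, namely $\binom n2-j$ with $0\le j\le\lfloor n/2\rfloor$, so whenever $n\ge 5$ the nonempty interval $[\binom{n-1}2+1,\ \binom n2-\lfloor n/2\rfloor-1]$ consists of infeasible pairs. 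Concretely $(n,m)=(5,7)$ is infeasible, since a $7$-edge graph on $5$ vertices would have to be a connected $K_5\setminus M$ (forcing $\ge 8$ edges), a path or cycle (forcing $\le 5$), or disconnected (forcing $\le\binom42=6$). This exhibits an infeasible pair, proving the first statement, and with the reduction above the second.

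The main obstacle is the structural characterization; everything downstream is routine counting, and the gap exists precisely because the dense family $K_t\setminus M$ cannot be perturbed to intermediate densities without creating a paw or a claw. I would either invoke Olariu's paw-free theorem as above or, to keep the paper self-contained, prove directly the needed case: in a connected $\{$paw, claw$\}$-free graph containing a triangle, a vertex with two non-neighbours together with a triangle forced by connectivity always yields an induced paw or claw, so the complement has maximum degree $\le 1$ and the graph is $K_t\setminus M$. The one point requiring care is ruling out that a dense component combined with sparse ones lands inside the gap, which is exactly what the bound $\binom{n-1}2$ for disconnected graphs handles.
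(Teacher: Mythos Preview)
Your argument is correct, and it reaches exactly the same infeasible interval as the paper does, but by a different route. The paper works on the sparse side, in the complementary family ${\cal F}(P_3\cup K_1,K_3\cup K_1)$: for $n\ge 5$ and $\lfloor n/2\rfloor+1\le m\le n-2$ one cannot pack $mK_2$, so some component contains a $P_3$; since $m\le n-2$ the graph is disconnected, and a vertex outside that component completes an induced $P_3\cup K_1$ or $K_3\cup K_1$. This is entirely elementary and self-contained. You instead work on the dense side and invoke Olariu's structure theorem for paw-free graphs to obtain a full description of connected $\{$paw, claw$\}$-free graphs (paths, cycles, or $K_t\setminus M$), and then a convexity bound for disconnected graphs. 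Your approach yields more: it gives the complete list of attainable edge counts above $\binom{n-1}{2}$, not merely one gap, and makes transparent why the gap closes exactly at $\binom{n}{2}-\lfloor n/2\rfloor$. The cost is the dependence on Olariu's theorem; your final paragraph sketches how to replace it by a direct argument, which would be worth writing out if self-containment matters. Either way, both proofs land on the same set of infeasible pairs, since $\binom{n}{2}-(n-2)=\binom{n-1}{2}+1$ and $\binom{n}{2}-(\lfloor n/2\rfloor+1)=\binom{n}{2}-\lfloor n/2\rfloor-1$.
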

\begin{proof}
These families are complementary and hence by Proposition \ref{prop2} the two statements are equivalent.  It is rather easy to check that the pair $( n,m)  = ( 5,3)$  forces an induced $P_3 \cup K_1$ or $K_3 \cup K_1$ and hence the pair $( n,m) =  (5,7)$  forces an induced  $Paw$  or $Claw$.   It is also still easy to check that the pair $(n,m) =  ( 6,4)$  forces an induced member of  $P_3 \cup K_1$ or $K_3 \cup K_1$and hence the pair $(n,m) =  ( 6,11)$ forces an induced $Paw$  or $Claw$. 

So clearly ${\cal F}(Paw , Claw )$ and  ${\cal F}(P_3 \cup K_1 , K_3 \cup K_1)$ are not feasible.

A more general argument is the following:    Suppose we consider a graph $G$ on $n \geq 5$ vertices and $m$  edges, $\lfloor \frac{n}{2} \rfloor +1 \leq m \leq n - 2$.   Then $G$ contains  $P_3$ as we cannot pack $mK_2$.  Clearly no  such graph on $m$  edges is connected for $n \geq 5$.
 
Consider the connected component  $B$ containing this $P_3$,  and a vertex $v \in V \backslash B$.  If $B$ is not a complete graph is must contain induced $P_3$ together with $v$ forming the induced subgraph $P_3 \cup  K_1$.  If $B$ is a clique it must be of order at least 3 as $B$ contains $P_3$.  But in this  case, together with $v$ we have $K_3 \cup  K_1$ as an induced  subgraph.   Hence  for  $n \geq 5$,  the pair $( n ,m)$  where  $\lfloor \frac{n}{2} \rfloor +1 \leq m \leq n - 2$ is not a feasible pair for  ${\cal F}(P_3 \cup K_1 , K_3 \cup K_1)$.

Hence by considering the complement, for $n \geq 5$  the pair  $( n ,  \binom{n}{2}- m )$  where $\lfloor \frac{n}{2} \rfloor +1 \leq m \leq n - 2$,  is a non-feasible pair for ${\cal F}(Paw,  Claw)$.  

However the pairs $( n,m)$ where $0 \leq m \leq \lfloor \frac{n}{2} \rfloor$ are feasible for  ${\cal  F} =  {\cal F}(P_3 \cup K_1 , K_3 \cup K_1)$  since the graph $mK_2$ is both  $P_3 \cup K_1$ and $K_3 \cup K_1$ induced free.  Also observe that the pair $( n ,n -1)$  is feasible  for  ${\cal  F} =  {\cal F}(P_3 \cup K_1 , K_3 \cup K_1)$  by taking the graph $K_{1,n-1}$.
\end{proof}

%\begin{problem}
Observe that graphs which are a union of cliques belong to ${\cal F} =  F(Paw , Claw )$  which forces that \[f(n,{\cal F}) \geq  \frac{n^2}{2} - \sqrt{2}n^{3/2} + O(n^{5/4})\]  as mentioned in the introduction,  and \[F(n, {\cal F})  =  \binom{n}{2}  - \left \lfloor \frac{n}{2}\right \rfloor -1 \mbox{ for }  n \geq 5.\]   

Hence by considering the complement, for ${\cal  F} =  {\cal F}(P_3 \cup K_1 , K_3 \cup K_1)$  we get
\[F(n,{\cal F}) \leq  \sqrt{2}n^{3/2} + O(n^{5/4})\]
 \[f(n, {\cal F})  =   \left \lfloor \frac{n}{2}\right \rfloor +1 \mbox{ for }  n \geq 5\]   

as proved above.   

\bigskip

\noindent \textbf{Problem}:  It would be interesting to improve the lower bound on $f(n, {\cal F})$  for ${\cal F} =  F(Paw , Claw )$  and the corresponding value of $F(n, {\cal F})$ for ${\cal  F} =  {\cal F}(P_3 \cup K_1 , K_3 \cup K_1)$. In particular is $F(n, {\cal F})$ linear in $n$ for ${\cal  F} =  {\cal F}(P_3 \cup K_1 , K_3 \cup K_1)$?

\bigskip

Another interesting question is:  since ${\cal F}(Paw , Claw )$ is not a feasible family, is ${\cal F}(Paw,K_{1,4})$ a feasible family or not, considering the fact that ${\cal F}(Paw , K_{1,3})  \subset {\cal F}(Paw,K_{1,4}) \subset  {\cal F}(Paw,K_{1,r})$  for $r \geq 5$.  We prove the following theorem to answer this question.

\begin{theorem}
 ${\cal F}(Paw,K_{1,4})$  is a feasible family and so is  ${\cal F}(Paw,K_{1,r})$ for $r \geq 5$.
\end{theorem}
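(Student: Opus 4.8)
The plan is to show that the single construction already at our disposal---the $\{K_3,K_2\}$-elimination process---produces graphs that are \emph{simultaneously} induced Paw-free and induced $K_{1,4}$-free; since that process realizes every pair $(n,m)$, feasibility of ${\cal F}(Paw,K_{1,4})$ follows immediately. Recall that for $p+r=n$ and $0\le 3x+y\le p$ the process yields the graphs $Q(p,r,x,y)=(K_p\setminus\{xK_3\cup yK_2\})\cup\overline{K_r}$, and that as the parameters range over all admissible values the number of edges sweeps through the whole interval $[0,\binom{n}{2}]$. It therefore suffices to verify the two forbidden induced subgraphs on these graphs.

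For Paw-freeness I would simply invoke the lemma preceding Section~3: since $Paw=H(4,2,0)$ and $(p,q,r)=(4,2,0)$ satisfies $p\ge 4$, $r\ge 0$, $2\le q\le p-2$, every graph $Q(p,r,x,y)$ is $H(4,2,0)$-free, i.e. Paw-free. So this half of the argument is essentially quotable.

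For $K_{1,4}$-freeness I would argue through the independence number of neighbourhoods. In $K_p\setminus\{xK_3\cup yK_2\}$ two vertices are non-adjacent precisely when the edge joining them was deleted, so the non-adjacency relation on these $p$ vertices is exactly the graph $xK_3\cup yK_2$. Consequently an independent set in $K_p\setminus\{xK_3\cup yK_2\}$ is a clique of $xK_3\cup yK_2$, and since the deleted edges form vertex-disjoint triangles and single edges, such a clique has at most three vertices; hence this part has independence number at most $3$. An induced $K_{1,4}$ would require a vertex whose neighbourhood contains an independent set of size $4$. The vertices of $\overline{K_r}$ have no neighbours, so the centre of any such star, together with all four of its neighbours, would lie in the $K_p\setminus\{xK_3\cup yK_2\}$ part, producing an independent set of size $4$ inside a part of independence number at most $3$---a contradiction. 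Thus every $Q(p,r,x,y)$ is $K_{1,4}$-free, and ${\cal F}(Paw,K_{1,4})$ is feasible.

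Finally, for $r\ge 5$ I would deduce feasibility of ${\cal F}(Paw,K_{1,r})$ by containment: $K_{1,4}$ is an induced subgraph of $K_{1,r}$, so by Proposition~\ref{propfeas3} every $K_{1,4}$-free graph is $K_{1,r}$-free, giving ${\cal F}(Paw,K_{1,4})\subseteq{\cal F}(Paw,K_{1,r})$, after which Proposition~\ref{prop1} upgrades feasibility of the smaller family to the larger. I do not expect a genuine obstacle here; the only point requiring care is the independence-number bound used for $K_{1,4}$-freeness, and the conceptual heart of the proof is the observation that deleting precisely a disjoint union of triangles and single edges simultaneously destroys the paw (no triangle can acquire a vertex adjacent to exactly one of its vertices) and the claw $K_{1,4}$ (no neighbourhood can hold four pairwise non-adjacent vertices).
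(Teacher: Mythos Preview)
Your proof is correct, and at its core it uses the very same construction as the paper, just viewed from the other side of complementation: the paper's graphs $K_p+\overline{K_{n-p}}$ with $aK_3\cup bK_2$ packed into the independent part are precisely the complements of your $Q(n-p,\,p,\,a,\,b)$. The paper passes to the complementary family ${\cal F}(P_3\cup K_1,\,K_4\cup K_1)$ and then re-does the edge-counting interval by interval, whereas you stay on the $Paw/K_{1,4}$ side and simply quote the $\{K_3,K_2\}$-elimination process (for the edge-count coverage) and the existing lemma (for Paw-freeness), adding only the $K_{1,4}$-freeness check. That check---that the complement of $K_p\setminus\{xK_3\cup yK_2\}$ on the $p$ vertices is $xK_3\cup yK_2$ together with isolated vertices, so its clique number and hence the independence number of the original is at most $3$---is exactly right, and since any putative centre and all four leaves of a $K_{1,4}$ must lie in that part, no $K_{1,4}$ can occur.

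So the two proofs are equivalent in substance; your presentation is the more economical one, because it recognises that the machinery already built in Section~2 (the $\{K_3,K_2\}$-elimination and its $H(4,2,0)$-freeness) does all the work once the single extra observation about $K_{1,4}$ is made.
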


\begin{proof}

By Proposition \ref{prop1}, if ${\cal F}(Paw,K_{1,4})$  is a feasible family then so is  ${\cal F}(Paw,K_{1,r})$ for $r \geq 5$.

We shall work with the complementary family ${\cal  F} =  {\cal F}(P_3 \cup K_1 , K_4 \cup K_1)$ and show that it is feasible.  Then  by Proposition \ref{prop2} ${\cal F}(Paw,K_{1,4})$  is also feasible.

We shall use the split graphs $K_p + \overline{K_{n-p}}$ for $0 \leq p \leq n-3$ to construct graphs in ${\cal F}(P_3 \cup K_1 , K_4 \cup K_1)$ which cover the range $0 \leq m \leq \binom{n}{2}-2$.  When $p = 0$, the graph is $\overline{K_n}$ with 0 edges.  We can pack a graph with $k$ edges of the form $aK_3 \cup bK_2 \cup cK_1$  with $ n =3a +2b +c $ and $k=3a +b $ for $0 \leq k \leq n-2$. This has been shown in the $\{K_3,K_2\}$-elimination process in Lemma \ref{K3K2}.  For p=1, the split graph is $K_1 + \overline{K_{n-1}}=K_{1,n-1}$ which has exactly $n-1$ edges.  Again, we  can pack, in the independent part $\overline{K_{n-1}}$ of order $n-1$, graphs with $k$ edges of the form $aK_3 \cup bK_2 \cup cK_1$  with $ n-1 =3a +2b +c $ and $k=3a +b $  for  $ 0 \leq k \leq n-3$. With these graphs we cover the range $n  - 1, \ldots, n - 1 + n -3 = 2n -4$. All these graphs are in ${\cal F}(P_3 \cup K_1 , K_4 \cup K_1)$.

In general, the split graph $K_p + \overline{K_{n-p}}$ has $\frac{p(n-1)+(n-p)p}{2}$ edges and we can pack, in the independent part $\overline{K_{n-p}}$ of order $n-p$, graphs with $k$ edges of the form with $aK_3 \cup bK_2 \cup cK_1$  with $ n - p =3a +2b +c $ and $k=3a +b $ for $0 \leq k \leq n-p-2$, and cover the range of values of $m$ from $\frac{p(n-1)+(n-p)p}{2}$ to $\frac{p(n-1)+(n-p)p}{2} + n-p-2$.  Again, all these graphs are in ${\cal F}(P_3 \cup K_1 , K_4 \cup K_1)$.

For the last value $p=n-3$, we cover the range \[\frac{(n-3)(n-1)+ 3(n-3}{2} = \frac{n^2-n-6}{2} = \binom{n}{2} -3\] up to \[\binom{n}{2} -3 + n-(n-3)-2 = \binom{n}{2} -3+1 =\binom{n}{2} -2\] as discussed. %Figure \ref{} shows the sequence of graphs for $n=7$ and $ 1 \leq p \leq 4$. [TO BE ADDED]

The final two values of $m$, which are $\binom{n}{2} -1$ and $\binom{n}{2}$ are covered by the graph $K_n\backslash K_2$ (which is in fact $K_{n-2}+\overline K_2$), and $K_n$ itself, both graphs being in ${\cal F}(P_3 \cup K_1 , K_4 \cup K_1)$.  Thus the whole range of edges $0 \leq m \leq \binom{n}{2}$ is covered and the family ${\cal F}(P_3 \cup K_1 , K_4 \cup K_1)$ is feasible, as well as the family ${\cal F}(Paw,K_{1,4})$, and ${\cal F}(Paw,K_{1,r})$ for $r \geq 5$ by Proposition \ref{prop1}.

 \end{proof}
Another  problem concerns the family  ${\cal F}=  { \cal F}(K_4 \backslash K_2 )$, the smallest case in TNF for which the order of $f(n, {\cal F})$ is  interesting.  Clearly graphs which are union of cliques belong to ${ \cal F}(K_4 \backslash K_2 )$  hence \[f(n, {\cal F}) \geq  \frac{n^2}{2} - \sqrt{2}n^{3/2} + O(n^{5/4}) \]     
which also holds  for $f(n, {\cal F}) $ when ${\cal F}= {\cal F}(K_k\backslash K_2)$ and  $k \geq 3$. This is asymptotically sharp for $k = 3$ as we have  already seen. 

Note that this result is equivalent to the result in \cite{erdHos1999induced}, showing that $( n ,e) \rightarrow ( 4,5)$ is an avoidable pair.

The following  arguments give some more information on the non-feasible pairs $(n, m)$ for ${\cal F}(K_k \backslash K_2)$.  Clearly  a trivial upper bound  for $n \geq k$ is  $m = \binom{n}{2} - 1$  since the graph  $K_n \backslash K_2$ contains an induced $K_k\backslash K_2$   implying that the pair $( n ,m)$ is not feasible.  Hence $F(n,{\cal F}) = \binom{n}{2} - 1$ for ${\cal F}= {\cal F}(K_k\backslash K_2)$, $k \geq 3$.

Now suppose $G$ is a graph on $n$ vertices and $\binom{n}{2}-  t$ edges  where $ t \geq 1$ and   $n  -2t \leq  k-2$.  The missing $t$ edges can cover (in the complement) at most $2t$ vertices hence in $G$ there are at least $k-2$ vertices forming a clique  and adjacent to all vertices of $G$.  Choose a missing edge $e =xy$  then the $k - 2$ vertices  and $\{x , y \}$ form an induced $K_k \backslash K_2$.  So with $t =  \left \lfloor \frac{n- k+2}{2} \right \rfloor$  and $\binom{n}{2}- \left \lfloor \frac{n- k+2}{2} \right \rfloor  \leq  m \leq \binom{n}{2}- 1$ all the pairs $( n,m)$  are non-feasible for the family ${\cal F}(K_k \backslash K_2)$, proving that $f(n, {\cal F}) \leq \binom{n}{2}- \left \lfloor \frac{n- k+2}{2} \right \rfloor$  for this family,  and $F(n,{\cal F})\geq   \left \lfloor \frac{n- k+2}{2}\right \rfloor$ for the complementary family ${\cal F} = {\cal F}( K_2 \cup (k-2)K_1)$.
\bigskip

\noindent \textbf{Problem}: It would be interesting to improve upon the  lower bound for $f(n,{\cal F})$  for ${\cal F} = {\cal F}(K_k\backslash K_2)$,  as well as the corresponding upper-bound $F(n,{\cal F})$ for  ${\cal F} = {\cal F}( K_2 \cup (k-2)K_1)$.

\bibliographystyle{plain}
\bibliography{inducedfree}
\end{document}